\newtheorem{theorem}{Theorem}[section]
\newtheorem{lemma}[theorem]{Lemma}
\theoremstyle{definition}
\newtheorem{definition}[theorem]{Definition}
\newtheorem{claim}{Claim}
 \newenvironment{claimproof}{\begin{proof}}{\end{proof}}
\begin{document}

\title{Nonstandard Convergence Gives Bounds on Jumps}
\author{Henry Towsner}
\date{\today}
\thanks{Partially supported by NSF grant DMS-1600263}
\address {Department of Mathematics, University of Pennsylvania, 209 South 33rd Street, Philadelphia, PA 19104-6395, USA}
\email{htowsner@math.upenn.edu}
\urladdr{\url{http://www.math.upenn.edu/~htowsner}}

\begin{abstract}
If we know that some kind of sequence always converges, we can ask how quickly and how uniformly it converges.  Many convergent sequences converge non-uniformly and, relatedly, have no computable rate of convergence.  However proof-theoretic ideas often guarantee the existence of a uniform ``meta-stable'' rate of convergence.

We show that obtaining a stronger bound---a uniform bound on the number of jumps the sequence makes---is equivalent to being able to strengthen convergence to occur in the nonstandard numbers.  We use this to obtain bounds on the number of jumps in nonconventional ergodic averages.
\end{abstract}

\maketitle

\section{Introduction}

Once we have proven that some kind of sequence $(a_n)_{n\in\mathbb{N}}$ converges, a natural question is to ask how quickly it converges.  It is not hard to show that there may not be a general rate of convergence\footnote{A \emph{rate of convergence} is a function $F:\mathbb{N}\rightarrow\mathbb{N}$ such that, for each $E>0$ and any $m>F(E)$, $d(a_{F(E)},a_m)<1/E$.}: it might be that in different situations, this sequence converges at substantially different rates, so that no rate of convergence suffices in general.

Indeed, this is the typical situation.  For example, consider the ergodic averages.  We have a probability space $(X,\mathcal{B},\mu)$ and a measurable, measure-preserving $T:X\rightarrow X$.  When $f$ is an $L^1(X)$ function, one can prove that the ergodic averages $A^T_Nf=\frac{1}{N}\sum_{i=0}^{N-1}f(T^ix)$ converge (in the $L^2$ norm \cite{Neumann01011932} and pointwise almost everywhere \cite{1931PNAS...17..656B}).  However it is known that the rate can be arbitrarily slow \cite{MR510630} and non-computable \cite{MR2206254}.

On the other hand, once we have proven convergence, there must be a weaker notion, a rate of metastable convergence\footnote{A \emph{rate of metastable convergence} is a functional $\mathcal{R}:\mathbb{N}\times \mathbb{N}^{\mathbb{N}}\rightarrow\mathbb{N}$ such that for each $E>0$ and each monotone $F:\mathbb{N}\rightarrow\mathbb{N}$, $d(a_{\mathcal{R}(E,F)},a_{F(\mathcal{R}(E,F))})<1/E$.} which is both computable and uniform \cite{avigad:MR2550151,iovino_duenez,MR3141811,tao08Norm,MR2943215,2016arXiv161005397C}.

Avigad and Rute have noted \cite{MR3345161} that we cannot, in general, expect anything more than a rate of metastable convergence.  Kohlenbach and Safarik identified proof-theoretic features of a proof which make it possible to extract a notion intermediate between a rate of convergence and a rate of metastable convergence \cite{MR3111912}: a uniform, computable bound on the number of $\epsilon$-jumps.  In this paper, we give an exact criterion for when this stronger bound can be obtained using nonstandard analysis, and use it to show the existence of such bounds on certain nonconventional ergodic averages.

Our criterion will involve taking a sequence $(a_n)_{n\in\mathbb{N}}$ and extending to a sequence $(a_{\bar n})_{\bar n\in\mathbb{N}^*}$ over the \emph{hypernatural numbers} (that is, the nonstandard natural numbers).  In complete generality, this is not possible: there is no unique choice of hypernatural numbers, and no canonical way to extend a sequence to the hypernatural numbers.

To fix this, we borrow an insight from Avigad and Iovino \cite{MR3141811}.  When we prove convergence, we prove it in some theory with a family of models.  If we formulate the theory in a reasonable way, the models will also be closed under ultraproducts, and so the corresponding sequences will still converge in these ultraproducts.  Furthermore, in any particular ultraproduct, there is a corresponding canonical choice of the hypernatural numbers, and a corresponding canonical extension of a sequence to those hyperreals.

Given the extension $(a_{\bar n})_{\bar n\in\mathbb{N}^*}$, we can ask whether we obtain a stronger kind of convergence: whenever $I\subseteq\mathbb{N}^*$ is a \emph{cut}---an initial segment closed under successor\footnote{The term ``cut'' is sometimes reserved for a stronger notion, adding the requirement that $I$ be closed under addition.  We follow the convention in the models of arithmetic literature, calling this stronger notion an \emph{additive cut}.}---we can ask about \emph{convergence in $I$}: is it the case that, for every (standard real) $\epsilon>0$, there is an $\bar n\in I$ so that, for all $\bar m\in I$ with $\bar m>\bar n$, $d(a_{\bar n},a_{\bar m})<\epsilon$.  (Of course, convergence in $\mathbb{N}$ is just the usual notion of convergence.)

Our main result shows that we obtain uniform bounds on the numbers of $\epsilon$-jumps exactly when these extensions converge in every $I$.  Formally, after giving definitions in Section \ref{sec:main}, we will show:
\begin{theorem}
Let $\mathcal{C}$ be a collection of pairs $((X,d),(a_n)_{n\in\mathbb{N}})$ where each $(a_n)$ is a sequence of elements in the corresponding metric space $(X,d)$.  The following are equivalent:
\begin{itemize}
\item there is a uniform bound on the number of $\epsilon$-jumps; that is, for every $\epsilon>0$ there is a $K$ so that in every pair $((X,d),(a_n)_{n\in\mathbb{N}})$ in $\mathcal{C}$ and every sequence $n_1<n_2<\cdots<n_K$, there is a $k<K$ with $d(a_{n_k},a_{n_{k+1}})<\epsilon$,
\item whenever $\mathcal{U}$ is a nonprincipal ultrafilter on $\mathbb{N}$, and, for each $i$, $((X_i,d_i),(a^i_n)_{n\in\mathbb{N}})\in \mathcal{C}$, in the ultraproduct $\prod_{\mathcal{U}}(X_i,d_i)$, the extended sequence $( a_{\bar n})_{\bar n\in\mathbb{N}^*}$ converges in every cut.
\end{itemize}
\end{theorem}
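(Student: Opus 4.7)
The plan is to prove the two implications separately: the forward direction transfers the uniform jump bound into the ultraproduct via \L o\'s's theorem and derives a contradiction with putative $I$-divergence, while the reverse direction constructs an explicit counterexample cut from a diagonalizing sequence of members of $\mathcal{C}$.

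For (1) $\Rightarrow$ (2), fix an ultraproduct $\prod_{\mathcal{U}}(X_i,d_i)$ and a cut $I\subseteq\mathbb{N}^*$, and suppose toward contradiction that the extended sequence $(a_{\bar n})$ diverges in $I$, witnessed by some standard $\epsilon_0>0$: every $\bar n\in I$ admits some $\bar m\in I$ with $\bar m>\bar n$ and $d_\mathcal{U}(a_{\bar n},a_{\bar m})\geq\epsilon_0$. Apply (1) at $\epsilon_0/2$ to obtain a uniform bound $K$. Since $K$ is a standard natural, \L o\'s's theorem transfers the bound internally: any $K$-tuple $\bar n_1<\cdots<\bar n_K$ in $\mathbb{N}^*$ must have some consecutive pair with $d_\mathcal{U}(a_{\bar n_k},a_{\bar n_{k+1}})\leq\epsilon_0/2$ (note the weak inequality, since the ultraproduct distance is an $\mathcal{U}$-limit of the $d_i$). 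But iterating the failure of $I$-convergence starting from any $\bar n_1\in I$ produces $\bar n_1<\cdots<\bar n_K$ all lying in $I$ with every consecutive distance $\geq\epsilon_0$, directly contradicting the transferred bound.

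For (2) $\Rightarrow$ (1), argue contrapositively. If (1) fails, fix a single $\epsilon>0$ witnessing the failure and for each $i\in\mathbb{N}$ select some $((X_i,d_i),(a^i_n))\in\mathcal{C}$ together with an $\epsilon$-jump sequence $n^i_1<\cdots<n^i_{K_i}$ of length $K_i\geq i$. Form the ultraproduct along any nonprincipal $\mathcal{U}$; the resulting internal jump sequence has nonstandard length $\bar K=[K_i]_\mathcal{U}$, and for each standard $k$ the hypernatural $\bar n_k:=[n^i_k]_\mathcal{U}$ is well-defined on the $\mathcal{U}$-large set $\{i:K_i\geq k\}$. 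Define $I:=\{\bar n\in\mathbb{N}^*:\bar n<\bar n_k\text{ for some standard }k\}$. This is plainly an initial segment, and closure under successor follows from $\bar n+1\leq\bar n_k<\bar n_{k+1}$, so $I$ is a cut. By \L o\'s, $d_\mathcal{U}(a_{\bar n_k},a_{\bar n_{k+1}})\geq\epsilon$ for every standard $k$. To witness divergence in $I$ at threshold $\epsilon/2$: any $\bar n\in I$ satisfies $\bar n<\bar n_k$ for some standard $k$, so $\bar n_k,\bar n_{k+1}\in I$ both exceed $\bar n$, and triangle inequality forces at least one of $d_\mathcal{U}(a_{\bar n},a_{\bar n_k})$, $d_\mathcal{U}(a_{\bar n},a_{\bar n_{k+1}})$ to be $\geq\epsilon/2$.

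The main obstacle is the interplay between strict and weak inequalities under the ultraproduct limit: a strict bound $<\epsilon$ on standard sequences becomes the weak bound $\leq\epsilon$ once one passes to the $\mathcal{U}$-limit metric, which forces us to apply the hypothesis at $\epsilon_0/2$ in the first direction and to buy a triangle-inequality slack of $\epsilon/2$ in the second. A secondary technical point, where the notion of \emph{cut} used in the paper genuinely matters, is verifying closure of $I$ under successor in the converse direction: strict monotonicity of the $\bar n_k$ gives this for free, but it would not suffice if one demanded the stronger additive closure. Beyond these two points the argument is bookkeeping in ultraproducts and the definition of convergence in a cut.
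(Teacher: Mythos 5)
Your proof is correct and follows the paper's argument essentially step for step: the forward direction iterates the failure of $I$-convergence to produce a long jump sequence inside $I$ and then descends to a single index on a $\mathcal{U}$-large set (which is exactly what your appeal to \L o\'s encodes), while the converse builds the cut $I=\{\bar m:\bar m<\bar n_k\text{ for some standard }k\}$ from a diagonal jump sequence and uses the triangle inequality to rule out convergence within $\epsilon/2$. Your handling of the strict-versus-weak inequality slack and the closure-under-successor check match the paper's proof.
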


To illustrate this idea, in Section \ref{sec:ergodic_in_cuts} we will show that, with a small modification, the original proof of the mean ergodic theorem satisfies the criterion given by the second equivalent condition in this theorem.  (The existence of a bound on jumps for this sequence already follows from Bishop's upcrossing inequalities \cite{bishop:MR0228655}.)

We will then turn to the ``nonconventional'' ergodic averages
\[\frac{1}{N}\sum_{n=1}^N (f_1\circ T_1^n)\cdots(f_k\circ T_k^n).\]
These averages were shown to converge by Tao \cite{tao08Norm}.  Although there are now several proofs of convergence \cite{MR2539560}, including a proof using nonstandard analysis \cite{towsner:MR2529651}, we will modify Austin's proof \cite{MR2599882} to show:
\begin{theorem}
  For every $d$ and every $\epsilon>0$, there is a $K$ so that whenever $(X,\mathcal{B},\mu)$ is a probability measure space, $T_1,\ldots,T_d:X\rightarrow X$ are a sequence of measurable, measure-preserving transformations, and $f_1,\ldots,f_d$ are functions with each $||f_i||_{L^\infty}\leq 1$, whenever $N_1<N_2<\cdots<N_K$ are given, there is an $i<K$ so that
\[||\frac{1}{N_i}\sum_{n=1}^{N_i} (f_1\circ T_1^n)\cdots(f_k\circ T_k^n)-\frac{1}{N_{i+1}}\sum_{n=1}^{N_{i+1}} (f_1\circ T_1^n)\cdots(f_k\circ T_k^n)||_{L^2}<\epsilon.\]
\end{theorem}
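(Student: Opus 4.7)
By the previous theorem, it suffices to prove that for every nonprincipal ultrafilter $\mathcal{U}$ on $\mathbb{N}$ and every $\mathcal{U}$-indexed family of systems $((X_i,\mathcal{B}_i,\mu_i),T_1^i,\ldots,T_d^i,f_1^i,\ldots,f_d^i)$ with $\|f_j^i\|_{L^\infty}\le 1$, the internal averages
\[A_{\bar N}=\frac{1}{\bar N}\sum_{n=1}^{\bar N}\prod_{j=1}^d f_j\circ T_j^n\]
interpreted inside the Loeb space $(X,\mathcal{B}_L,\mu_L)$ of the ultraproduct $\prod_{\mathcal{U}} X_i$ converge in $L^2$ along every cut $I\subseteq\mathbb{N}^*$. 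I would fix $\mathcal{U}$, form the Loeb space, and let $T_j$, $f_j$ be the canonical internal extensions of $(T_j^i)_i$ and $(f_j^i)_i$.

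The strategy is to transcribe Austin's convergence proof into this nonstandard setting. Austin's argument has two components: first, the construction of a \emph{sated} extension of the original system in which every characteristic factor needed by a sub-average is fully realized; second, an inductive reduction on $d$, via a van der Corput / PET step, that bounds the $d$-variable average by an average of $(d-1)$-variable averages applied to ``derivative'' functions of the form $f_j\cdot f_j\circ T_j^h$. In our framework the first component is automatic: the Loeb space of an ultraproduct is $\aleph_1$-saturated, so any factor produced during the proof already sits inside $L^\infty(X,\mu_L)$, and satedness holds without an external transfinite extension construction. This lets us focus entirely on the inductive step.

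The induction goes as follows. The base case $d=1$ is the mean ergodic theorem, which is shown to converge in every cut in Section~\ref{sec:ergodic_in_cuts}. For the step from $d-1$ to $d$, I would apply van der Corput internally to a sum of internal length $\bar N\in I$ with shift window of length $H$, bounding $\|A_{\bar N}\|_{L^2}^2$, up to an error $O(1/H+H/\bar N)$, by an average over $h<H$ of $(d-1)$-variable nonconventional averages of the derivative functions $f_j\cdot f_j\circ T_j^h$ on the system whose transformations are $T_j T_d^{-1}$ (so that the $T_d^h$ factor disappears). Combined with the satedness reduction---now automatic---the inductive hypothesis in the cut $I$ then forces the Cauchy-in-$I$ condition for $A_{\bar N}$.

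The main obstacle is that the inductive hypothesis has to produce convergence uniformly over the family of shifted systems parametrized by $h$, and one has to choose the van der Corput window $H$ so that it sits sensibly relative to the cut: $H$ must be large enough that $1/H$ is infinitesimal and small enough that $H/\bar N$ is infinitesimal for all $\bar N\in I$ above some base. For a cut with no cofinal standard elements $H$ is itself nonstandard, which is the reason a direct external argument runs into trouble. I would resolve this by formulating the induction hypothesis as the uniform statement actually being proved (i.e., the jump bound itself, which is equivalent to cut convergence by the main theorem applied one dimension down); this way a single application of the hypothesis to the family $\{f_j\cdot f_j\circ T_j^h\}_{h<H}$ automatically delivers the uniformity in $h$ needed to close the van der Corput estimate inside $I$.
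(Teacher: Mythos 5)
Your overall architecture is correct and matches the paper's: reduce via Theorem~\ref{thm:main} to showing convergence in every cut of the ultraproduct, do induction on~$d$ with the mean ergodic theorem (Section~\ref{sec:ergodic_in_cuts}) as base case, and use van der Corput for the inductive step. However, two substantive pieces are missing. First, you never distinguish additive from non-additive cuts. This is not cosmetic: the van der Corput lemma the paper uses is stated only for additive cuts, and the cut-relativized Furstenberg self-joining $\mu^{\oplus d,I}$ is invariant under $\tilde T_1$ only because $I$ is closed under addition (the ``$\bar s+\bar n$ to $\bar s$'' reindexing in the invariance computation needs it). Non-additive cuts must be handled separately, which the paper does cheaply via Lemma~\ref{thm:non_additive} (averages of uniformly bounded terms are automatically Cauchy in any non-additive cut). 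Without that split, your van der Corput step simply does not type-check when~$I$ is not additive.

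Second, the claim that satedness is ``automatic'' because the Loeb space is saturated, so you can ``focus entirely on the inductive step,'' elides the actual technical content of the inductive step. The paper does not invoke satedness at all; instead it explicitly builds a subspace $\mathcal{N}$ spanned by functions $u_g$ defined as a \emph{double} limit (over $\bar H$ and $\bar S$, both taken to $I$), proves the existence of this double limit by passing to $(X^d,\mu^{\oplus d,I})$ and applying Theorem~\ref{thm:mean} to $\tilde T_1$, and then projects $f_1$ onto $\mathcal{N}$. Your plan of applying the inductive hypothesis uniformly to the family $\{f_j\cdot f_j\circ T_j^h\}_{h<H}$ controls only the inner limit over~$s$ for each fixed~$h$; it does not control the outer average over $h$, which is exactly what the self-joining is for. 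A uniform jump bound in $h$ is not enough, because the van der Corput conclusion involves the Cesàro average over $h$, and one needs that average to converge (to the function $u_{f^-}$, whose vanishing pairing with $f^-$ gives the contradiction), not merely each summand. The self-joining machinery is therefore not optional, and the ``one application of the hypothesis to the $h$-family'' shortcut does not close the estimate.
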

The $d=1$ case is the regular ergodic theorem, and for the $d=2$ case stronger variational inequalities are known \cite{MR2417419,MR3224563}.

Finally, in Section \ref{sec:distances} we consider a generalization where we only restrict those jumps where $n_k$ and $n_{k+1}$ are ``far apart'', and show that this corresponds to a weaker condition where the extended sequences only converge in cuts with additional closure properties.

\section{Nonstandard Convergence}\label{sec:main}

\subsection{Ultraproducts}

Throughout, we assume that $\mathcal{U}$ is a nonprincipal ultrafilter on $\mathbb{N}$; essentially nothing would change if we replaced $\mathcal{U}$ with a nonprincipal, non-countably complete ultrafilter on some larger index set.
 
We recall the notion of an ultraproduct for metric structures.  A detailed exposition is given in \cite{MR2436146}.

\begin{definition}
  For each $i\in\mathbb{N}$, let $r_i\in\mathbb{R}$.  If there is some $B$ so that $\{i\mid |r_i|\leq B\}\in \mathcal{U}$ then the \emph{ultralimit} $\lim_{\mathcal{U}}r_i$ is defined to be the unique $r\in\mathbb{R}$ such that, for every $\epsilon>0$, $\{i\mid |r_i-r|<\epsilon\}\in\mathcal{U}$.  

For each $i\in \mathbb{N}$, let $(X_i,d_i)$ be a metric space.  The \emph{metric ultraproduct} $\prod_{\mathcal{U}}(X_i,d_i)$ is a metric space $(X_{\mathcal{U}},d_{\mathcal{U}})$ given by:
\begin{itemize}
\item $Y_{\mathcal{U}}$ consists of sequences $\langle x_i\rangle_{i\in \mathbb{N}}$ where $x_i\in X_i$ for each $i\in \mathbb{N}$,
\item we define an equivalence relation $\sim_{\mathcal{U}}$ on $Y_{\mathcal{U}}$ by $\langle x_i\rangle_{i\in \mathbb{N}}\sim_{\mathcal{U}}\langle y_i\rangle$ iff $\lim_{\mathcal{U}}d_i(x_i,y_i)=0$,
\item $X_{\mathcal{U}}=Y_{\mathcal{U}}/\sim_{\mathcal{U}}$,
\item $d_{\mathcal{U}}([\langle x_i\rangle_{i\in \mathbb{N}}],[\langle y_i\rangle_{i\in \mathbb{N}}])=\lim_{\mathcal{U}}d_i(x_i,y_i)$ if this exists and $\infty$ otherwise.
\end{itemize}
\end{definition}

\begin{definition}
  When $\mathcal{U}$ is an ultrafilter, a \emph{nonstandard natural number} (relative to $\mathcal{U}$) is an equivalence class of sequences $\langle n_i\rangle_{i\in \mathbb{N}}$ where each $n_i\in\mathbb{N}$, taking $\langle n_i\rangle\sim_{\mathcal{U}}\langle m_i\rangle$ iff $\{i\mid n_i=m_i\}\in\mathcal{U}$.
\end{definition}

When $\mathcal{U}$ is clear from context, we write $\mathbb{N}^*$ for the set of nonstandard natural numbers.  The nonstandard integers are defined similarly, and we sometimes write $\mathbb{Z}^*$ for the nonstandard integers.  Recall that $\mathbb{N}$ embeds canonically as an initial segment of $\mathbb{N}^*$ by associating any $n\in\mathbb{N}$ with the constant sequence $[\langle n\rangle_{i\in \mathbb{N}}]\in\mathbb{N}^*$.  Of course, $\mathbb{N}$ is a proper initial segment; for instance, $[\langle i\rangle_{i\in\mathbb{N}}]$ is larger than any element of (the image of) $\mathbb{N}$.

\begin{definition}
  Suppose that, for each $i\in \mathbb{N}$, $\langle a^i_n\rangle_{n\in\mathbb{N}}$ is a sequence of elements of $X_i$.  Then, for any nonstandard natural number $\bar n=[\langle n_i\rangle]$, we define $a_{\bar n}=[\langle a^i_{n_i}\rangle]$.
\end{definition}

It is easy to see that $a_{\bar n}$ is well-defined: if $\langle n_i\rangle$ and $\langle m_i\rangle$ represent the same element of $\mathbb{N}^*$, so $\{i\mid n_i=m_i\}\in\mathcal{U}$ then $\{i\mid d_i(a^i_{n_i},a^i_{m_i})=0\}\in\mathcal{U}$, and therefore $[\langle a^i_{n_i}\rangle]=[\langle a^i_{m_i}\rangle]$.

\begin{definition}
  A \emph{cut} in $\mathbb{N}^*$ is a subset $I\subseteq\mathbb{N}^*$ which is an initial segment, and such that whenever $\bar n\in I$, also $\bar n+1\in I$.
\end{definition}

$\mathbb{N}$ and $\mathbb{N}^*$ are the smallest and largest cuts, respectively.  For more interesting examples, whenever $\bar n\in\mathbb{N}$,
\begin{itemize}
\item $\{\bar m\mid \exists k\in\mathbb{N}\ \bar m<\bar n+k\}$,
\item $\{\bar m\mid \exists k\in\mathbb{N}\ \bar m<k\cdot \bar n\}$, and
\item $\{\bar m\mid \exists k\in\mathbb{N}\ \bar m<\bar n^k\}$
\end{itemize}
are also cuts.

\begin{definition}
 When $I$ is a cut, we say a sequence $(a_{\bar n})_{\bar n\in\mathbb{N}^*}$ \emph{converges in $I$} if for every real $\epsilon>0$, there is an $\bar n\in I$ so that, for all $\bar m\in I$ with $\bar m>\bar n$, $d(a_{\bar n},a_{\bar m})<\epsilon$.
\end{definition}

\subsection{Main Theorem}

\begin{definition}
  Let $(a_n)_{n\in\mathbb{N}}$ be a sequence of elements in some metric space.  For any $\epsilon>0$, we say $(a_n)$ \emph{admits $K$ $\epsilon$-jumps} if there are $n_1<n_2<\cdots<n_K$ such that, for each $k<K$, $d(a_{n_k},a_{n_{k+1}})\geq\epsilon$. 
\end{definition}

\begin{theorem}\label{thm:main}
Let $\mathcal{C}$ be a collection of pairs $((X,d),(a_n)_{n\in\mathbb{N}})$ where each $(a_n)$ is a sequence of elements in the corresponding metric space $(X,d)$.  The following are equivalent:
\begin{itemize}
\item for every $\epsilon>0$ there is a $K$ so that, for every $((X,d),(a_n)_{n\in\mathbb{N}})\in \mathcal{C}$, the sequence $(a_n)_{n\in\mathbb{N}}$ does not admit $K$ $\epsilon$-jumps,
\item whenever $\mathcal{U}$ is a nonprincipal ultrafilter on $\mathbb{N}$, and, for each $i$, $((X_i,d_i),(a^i_n)_{n\in\mathbb{N}})\in \mathcal{C}$, the sequence $(a_{\bar n})_{\bar n\in\mathbb{N}^*}$ converges in every cut in $(X_{\mathcal{U}},d_{\mathcal{U}})$.
\end{itemize}
\end{theorem}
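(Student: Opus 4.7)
The plan is to prove each direction by contraposition, using only the fact that $d_{\mathcal{U}}$ is the ultralimit of the component distances $d_i$.

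For (ii) $\Rightarrow$ (i), I would suppose some $\epsilon>0$ admits no uniform bound on $\epsilon$-jumps across $\mathcal{C}$. For each $i\in\mathbb{N}$ I can then choose $((X_i,d_i),(a^i_n)_n)\in\mathcal{C}$ together with witnesses $n_1^{(i)}<\cdots<n_i^{(i)}$ satisfying $d_i(a^i_{n_k^{(i)}},a^i_{n_{k+1}^{(i)}})\geq\epsilon$. In any ultraproduct along a nonprincipal $\mathcal{U}$, setting $\bar n_k=[\langle n_k^{(i)}\rangle_i]$ (padded arbitrarily where undefined) yields a strictly increasing sequence in $\mathbb{N}^*$ with $d_{\mathcal{U}}(a_{\bar n_k},a_{\bar n_{k+1}})\geq\epsilon$, since the relevant inequalities hold on the cofinite set $\{i\geq k+1\}$. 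The set $I=\{\bar m\in\mathbb{N}^* : \exists k\in\mathbb{N},\ \bar m<\bar n_k\}$ is then a cut (closure under successor follows from strict increase of the $\bar n_k$'s), every $\bar n_k$ lies in $I$, and the subsequence $(a_{\bar n_k})_k$ is $\epsilon$-separated and cofinal in $I$; a triangle-inequality argument at scale $\epsilon/3$ shows that no element of $I$ can witness convergence.

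For (i) $\Rightarrow$ (ii), I would suppose instead that some ultraproduct contains a cut $I$ in which the extended sequence fails to converge, and fix $\epsilon>0$ witnessing this failure. Unpacking the definition, for every $\bar n\in I$ there is some $\bar m\in I$ with $\bar m>\bar n$ and $d_{\mathcal{U}}(a_{\bar n},a_{\bar m})\geq\epsilon$; iterating, I build $\bar n_1<\bar n_2<\cdots$ in $I$ with consecutive $d_{\mathcal{U}}$-distances $\geq\epsilon$. Now fix any standard $K$ and any $\epsilon'<\epsilon$. Because $\lim_{\mathcal{U}} r_i\geq\epsilon$ implies $\{i : r_i>\epsilon'\}\in\mathcal{U}$, and because $\bar n_k<\bar n_{k+1}$ in $\mathbb{N}^*$, the finite intersection
\[\bigcap_{k<K}\bigl\{i : n_k^{(i)}<n_{k+1}^{(i)}\text{ and }d_i(a^i_{n_k^{(i)}},a^i_{n_{k+1}^{(i)}})>\epsilon'\bigr\}\]
lies in $\mathcal{U}$ and is therefore nonempty; any $i$ in it gives a pair in $\mathcal{C}$ admitting $K$ many $\epsilon'$-jumps. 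Since $K$ was arbitrary, (i) fails at scale $\epsilon'$.

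The argument is essentially formal once the ultralimit description of $d_{\mathcal{U}}$ is in hand, so I do not foresee any serious obstacle. The one subtlety, and the place where I would be careful, is the $\epsilon$-to-$\epsilon'$ slack in the (i) $\Rightarrow$ (ii) direction: in general $\lim_{\mathcal{U}} r_i\geq\epsilon$ does not guarantee $\{i : r_i\geq\epsilon\}\in\mathcal{U}$, only $\{i : r_i>\epsilon'\}\in\mathcal{U}$ for each $\epsilon'<\epsilon$. This is harmless at the level of the theorem because both conditions quantify over all positive reals, but it is the reason the two directions are not literally a transfer of the same statement.
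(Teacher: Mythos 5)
Your proposal is correct and takes essentially the same route as the paper: both directions construct the cut generated by the increasing witness sequence, apply a triangle-inequality slack, and handle the $\epsilon$-to-$\epsilon'$ loss by passing to a slightly smaller threshold inside a finite intersection of $\mathcal{U}$-large sets. The paper phrases the second direction as a direct argument reaching a contradiction rather than your contrapositive framing, but the content is identical.
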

\begin{proof}
  Suppose the former fails: there is some $\epsilon>0$ so that, for every $K$, there is an $((X,d),(a_n)_{n\in\mathbb{N}})\in \mathcal{C}$ so that $(a_n)_{n\in\mathbb{N}}$ admits $K$ $\epsilon$-jumps.  For each $K$, choose such an $((X^K,d^K),(a^K_n)_{n\in\mathbb{N}})$ and choose witnesses $n^K_1<n^K_2<\cdots<n^K_K$ so that, for each $k<K$, $d(a^K_{n^K_k},a^K_{n^K_{k+1}})\geq\epsilon$.

Take any nonprincipal ultrafilter $\mathcal{U}$ and consider the sequence $(a_{\bar n})_{\bar n\in\mathbb{N}^*}$.  For each $i\in\mathbb{N}$, take $\bar n_i=[\langle n^K_i\rangle_{K\in\mathbb{N}}]$ (where we take $n^K_i=0$ if $K<i$; for each $i$, there are only finitely many such $K$, so this arbitrary choice does not affect the value of $\bar n_i$).  Let $I=\{\bar m\mid \exists i\ \bar m<\bar n_i\}$; this is a cut, since if $\bar m<\bar n_i$ then $\bar m+1<\bar n_i+1\leq \bar n_{i+1}$.  But for each $i$, $d_{\mathcal{U}}(a_{\bar n_i},a_{\bar n_{i+1}})=\lim_{\mathcal{U}}d_K(a^K_{n^K_i},a^K_{n^K_{i+1}})\geq\epsilon$.  Therefore $(a_{\bar n})_{\bar n\in\mathbb{N}^*}$ does not converge to within $\epsilon/2$ in the cut $I$: given any $\bar m\in I$, we may find some $\bar n_i>\bar m$ by definition, and by the triangle inequality, either $d_{\mathcal{U}}(a_{\bar m},\bar n_i)\geq\epsilon/2$ or $d_{\mathcal{U}}(a_{\bar m},\bar n_{i+1})\geq\epsilon/2$.

Conversely, suppose the former holds, and consider any nonprincipal ultrafilter $\mathcal{U}$, any sequence $((X_i,d_i),(a^i_n)_{n\in\mathbb{N}})$ of elements of $\mathcal{C}$.  Consider some cut $I$ and some $\epsilon>0$.  Let $K$ witness the uniform bound for $\epsilon/2$-jumps.  Choose any $\bar n_1\in I$; if $\bar n_1$ does not witness convergence in $I$, there must be some $\bar n_2>\bar n_1$ with $\bar n_2\in I$ and $d_{\mathcal{U}}(\bar n_1,\bar n_2)\geq\epsilon$.  We continue choosing $\bar n_3>\bar n_2$ in $I$, and so on.  If we find $k<K$ so that $\bar n_k$ witnesses convergence in $I$, we are done.  Otherwise, we find $\bar n_1<\bar n_2<\cdots<\bar n_K$ all in $I$ with $d_{\mathcal{U}}(a_{\bar n_k},a_{\bar n_{k+1}})\geq\epsilon$ for each $k<K$.

Then, for each $k<K$, $\lim_{\mathcal{U}}d_i(a^i_{n^i_k},a^i_{n^i_{k+1}})\geq\epsilon$.  In particular, $\{i\mid d_i(a^i_{n^i_k},a^i_{n^i_{k+1}})>\epsilon/2\}\in\mathcal{U}$.  So we may choose a single $i$ so that, for all $k<K$ simultaneously, $d_i(a^i_{n^i_k},a^i_{n^i_{k+1}})>\epsilon/2$ and $n^i_1<n^i_2<\cdots<n^i_K$.  But this shows that $(a^i_n)_{n\in\mathbb{N}}$ admits $K$ $\epsilon/2$-jumps, which is a contradiction.  So it must be that, for some $k<K$, $\bar n_k$ witnessed convergence in $I$.
\end{proof}


\section{The Mean Ergodic Theorem}\label{sec:ergodic_in_cuts}

As a warm up (and to establish our base case), we modify the proof of the mean ergodic theorem to hold in every cut in an ultraproduct.  The proof does \textit{not} go through unchanged.  Like most proofs, von Neumann's proof of the mean ergodic theorem requires a certain amount of arithmetic, which amounts to saying that it only goes through in ``nice enough'' cuts.  In this case the condition is mild: von Neumann's argument needs the cut to be \emph{additive} (that is, closed under addition).  However, as we will show, averages always converge in non-additive cuts, so we are able to complete the proof.  (This dichotomy between additive and non-additive cuts should be compared to the ``gap condition'' appearing in \cite{MR3111912}.)

We first note that averages always converge in non-additive cuts.
\begin{definition}
  A cut $I\subseteq\mathbb{N}^*$ is \emph{additive} if whenever $\bar n,\bar m\in I$, also $\bar n+\bar m\in I$.
\end{definition}

\begin{lemma}\label{thm:non_additive}
  Let $I$ be a non-additive cut, let $(c_{\bar n})_{\bar n\in I}$ be a sequence of elements of $L^2(X)$ with norm bounded by $1$, and let $a_{\bar N}=\frac{1}{\bar N}\sum_{\bar n=1}^{\bar N}c_{\bar n}$.  Then the sequence $(a_{\bar N})$ converges in $I$.
\end{lemma}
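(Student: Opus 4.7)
The plan is to convert the non-additivity of $I$ into a multiplicative control statement that is tailored to averaging. Because $I$ is non-additive, there exist $\bar a, \bar b \in I$ with $\bar a+\bar b \notin I$; setting $\bar m = \max(\bar a,\bar b) \in I$ gives $2\bar m \geq \bar a+\bar b$, and since $I$ is an initial segment, $2\bar m \notin I$. Consequently, every $\bar N' \in I$ satisfies $\bar N' < 2\bar m$.

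The key intermediate step I would establish is: for every standard real $q > 1$ there exists $\bar N \in I$ with $\lceil q\bar N\rceil \notin I$. I would prove this by finite iteration. Define $\bar N_0 = \bar m$ and $\bar N_{j+1} = \lceil q\bar N_j\rceil$, so that $\bar N_j \geq q^j\bar m$. Fix a standard $j_0$ with $q^{j_0} \geq 2$; then $\bar N_{j_0} \geq 2\bar m$, so $\bar N_{j_0} \notin I$. Let $j^*$ be the least index with $\bar N_{j^*} \notin I$; since $\bar N_0 = \bar m \in I$ we have $j^* \geq 1$, and the required witness is $\bar N := \bar N_{j^*-1}$.

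Given $\epsilon > 0$, pick any standard $q > 1$ satisfying $2(q-1)/q < \epsilon$ and select $\bar N \in I$ as above. For any $\bar N' \in I$ with $\bar N' > \bar N$, the facts that $\lceil q\bar N\rceil \notin I$ and $I$ is an initial segment force $\bar N' < \lceil q\bar N\rceil \leq q\bar N + 1$, hence $\bar N' \leq q\bar N$ and $\bar N'/\bar N \leq q$. The telescoping identity
\[ a_{\bar N'} - a_{\bar N} = \frac{1}{\bar N'}\sum_{\bar n=\bar N+1}^{\bar N'} c_{\bar n} + \Big(\frac{1}{\bar N'} - \frac{1}{\bar N}\Big)\sum_{\bar n=1}^{\bar N} c_{\bar n}, \]
together with $\|c_{\bar n}\| \leq 1$ and the triangle inequality, then yields
\[ \|a_{\bar N'} - a_{\bar N}\| \leq \frac{2(\bar N' - \bar N)}{\bar N'} \leq \frac{2(q-1)}{q} < \epsilon, \]
so $\bar N$ witnesses convergence in $I$ at scale $\epsilon$.

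The main obstacle is the intermediate step---extracting a uniformly applicable multiplicative upper bound from the single instance of non-additivity. Once that iteration is in hand, the averaging estimate is routine and uses nothing about $(c_{\bar n})$ beyond the norm bound, which incidentally makes clear that the statement is really a geometric fact about non-additive cuts rather than anything specific to ergodic theory.
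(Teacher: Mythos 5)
Your proof is correct and follows essentially the same route as the paper: pick $\bar N\in I$ whose multiplicative scaling by a standard $q>1$ lands outside $I$, then use the standard averaging estimate $\|a_{\bar N'}-a_{\bar N}\|\leq 2(\bar N'-\bar N)/\bar N'\leq 2(q-1)/q$. The one thing you add is a fully worked-out argument (the geometric iteration $\bar N_{j+1}=\lceil q\bar N_j\rceil$ starting from $\bar m$ with $2\bar m\notin I$) for the existence of such an $\bar N$, which the paper merely asserts as a consequence of non-additivity.
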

\begin{proof}
  For any $\epsilon>0$, we choose $\bar N\in I$ so that $\lfloor\frac{\bar N}{1-\epsilon/2}\rfloor\not\in I$; such a $\bar N$ exists because $I$ is not additive.  Then whenever $\bar M>\bar N$ belongs to $I$, we have $\bar M<\frac{\bar N}{1-\epsilon/2}$ and therefore
  \begin{align*}
    ||a_{\bar N}-a_{\bar M}||_{L^2}
&=||\frac{1}{\bar N}\sum_{\bar n=1}^{\bar N}c_{\bar n}-\frac{1}{\bar M}\sum_{\bar m=1}^{\bar M}c_{\bar m}||_{L^2}\\
&=\frac{\bar M-\bar N}{\bar M\bar N}||\sum_{\bar n=1}^{\bar N}c_{\bar n}||_{L^2}+\frac{1}{\bar M}||\sum_{\bar m=\bar N+1}^{\bar M}c_{\bar m}||_{L^2}\\
&=2\frac{\bar M-\bar N}{\bar M}\\
&<\epsilon.
  \end{align*}
\end{proof}

\begin{definition}
Let $(X,\mu)$ be a probability measure space and $T:\mathbb{Z}\curvearrowright (X,\mu)$ a measurable, measure-preserving action.  For any $f\in L^1(X)$, we define the \emph{ergodic average} $A^T_Nf$ by $(A^T_Nf)(x)=\frac{1}{N}\sum_{n=1}^Nf(T^nx)$.
\end{definition}

\begin{theorem}\label{thm:mean}
For every $\epsilon>0$ there is a $K$ so that whenever $(X,\mu)$ be a probability measure space and $T:\mathbb{Z}\curvearrowright(X,\mu)$ a measurable, measure-preserving action and $f\in L^2(\mu)$ with $||f||_{L^2}\leq 1$, the sequence $A^T_Nf$ does not admit $K$ $\epsilon$-jumps in the $L^2$ norm.
\end{theorem}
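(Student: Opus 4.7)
The plan is to invoke Theorem~\ref{thm:main}, which reduces the claim to showing the following: for every nonprincipal ultrafilter $\mathcal{U}$ and every choice of systems $(X_i,\mu_i,T_i)$ together with $f_i\in L^2(\mu_i)$ of norm at most one, the sequence $(A^T_{\bar N}f)_{\bar N\in\mathbb{N}^*}$ of ergodic averages in the ultraproduct Hilbert space $L^2_{\mathcal{U}}$ converges in every cut $I\subseteq\mathbb{N}^*$. Since each $T_i$ is an invertible measure-preserving transformation, its Koopman operator is a unitary on $L^2(X_i)$, and these glue to a unitary $T$ on the metric ultraproduct $L^2_{\mathcal{U}}$, which is itself a Hilbert space.

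Following the dichotomy flagged in the discussion preceding Lemma~\ref{thm:non_additive}, I would split into cases according to whether $I$ is additive. If $I$ is non-additive, apply Lemma~\ref{thm:non_additive} with $c_{\bar n}:=T^{\bar n}f$: since $T$ is an isometry, $\|c_{\bar n}\|_{L^2}\leq 1$, and the lemma immediately produces a witness $\bar N\in I$ to convergence of $A^T_{\bar N}f=\frac{1}{\bar N}\sum_{\bar n=1}^{\bar N}c_{\bar n}$ in $I$.

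If $I$ is additive, I would run von Neumann's argument inside $L^2_{\mathcal{U}}$. Decompose $f=Pf+g$ with $Pf\in\ker(T-\mathrm{Id})$ and $g$ in the closure of $\{Tk-k:k\in L^2_{\mathcal{U}}\}$. For $\epsilon>0$, pick $k\in L^2_{\mathcal{U}}$ with $\|g-(Tk-k)\|<\epsilon/2$; the norm $\|k\|$ is a standard real by the definition of the metric ultraproduct. Since $A^T_{\bar N}$ is a contraction on $L^2_{\mathcal{U}}$ and the telescoping identity $A^T_{\bar N}(Tk-k)=\frac{1}{\bar N}(T^{\bar N+1}k-Tk)$ gives $\|A^T_{\bar N}(Tk-k)\|\leq 2\|k\|/\bar N$, one obtains $\|A^T_{\bar N}f-Pf\|\leq\epsilon/2+2\|k\|/\bar N$, which is below $\epsilon$ as soon as $\bar N>4\|k\|/\epsilon$. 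The threshold $4\|k\|/\epsilon$ is a standard real, so some $\bar N\in\mathbb{N}\subseteq I$ exceeds it; the same estimate holds for every $\bar M\in I$ above $\bar N$, and the triangle inequality then gives convergence in $I$.

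The main obstacle is just to confirm that the ultraproduct genuinely supports the Hilbert-space machinery used above---that $T$ lifts to a unitary, that orthogonal projection onto the $T$-fixed subspace is defined, and that the set of coboundaries $\{Tk-k:k\in L^2_{\mathcal{U}}\}$ is dense in its closure inside $L^2_{\mathcal{U}}$ rather than only in some standard Hilbert space. These are routine consequences of the theory of metric ultraproducts cited in Section~\ref{sec:main}, but they are the only places where the nonstandard framework meaningfully enters the argument beyond the invocation of Lemma~\ref{thm:non_additive}.
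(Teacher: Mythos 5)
Your proof is correct, but it takes a genuinely different route from the paper's. You apply the classical von Neumann decomposition in the ultraproduct Hilbert space $L^2_{\mathcal{U}}$: write $f = Pf + g$ with $Pf \in \ker(U-\mathrm{Id})$ and $g$ in the closure of the coboundaries $\{Uk-k\}$, approximate $g$ by some $Uk-k$ with $\|k\|$ a \emph{standard} real, and observe that the telescoping bound $\|A^T_{\bar N}(Uk-k)\|\leq 2\|k\|/\bar N$ therefore has a \emph{standard} threshold $4\|k\|/\epsilon$, which automatically lies in every cut. This is actually a small strengthening of what you claim: because the threshold is standard, the argument works for \emph{all} cuts at once, so the case split on additivity and the appeal to Lemma~\ref{thm:non_additive} are not needed in your route (though including them does no harm). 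By contrast, the paper replaces the full coboundary space with the cut-dependent subspace $\mathcal{N}$ spanned by $\{f-f\circ T^{\bar n} : \bar n\in I\}$; the approximants then involve nonstandard shifts $\bar n_i$, the convergence threshold $\lceil 4\sum_i|c_i|\bar n_i/\epsilon\rceil$ is nonstandard, and additivity of $I$ is genuinely required to keep that threshold inside $I$. That makes the non-additive case (Lemma~\ref{thm:non_additive}) an essential complementary step in the paper, not a redundancy. The paper accepts this extra complication because the $\mathcal{N}$-style decomposition is exactly the template that survives the passage to nonconventional averages in Section~4, where there is no single unitary whose kernel and coboundaries carve up the space; your Hilbert-space shortcut, while cleaner here, does not generalize in the same way. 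One point worth stating explicitly in your write-up: the density of $\{Uk-k : k\in L^2_{\mathcal{U}}\}$ in $\ker(U-\mathrm{Id})^\perp$ is a consequence of the general identity $\overline{\mathrm{ran}(A)}=\ker(A^*)^\perp$ together with unitarity of $U$, valid in any Hilbert space---so the only thing one must check is that the bounded part of the metric ultraproduct is complete, which is the standard fact you gesture at in your last paragraph.
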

\begin{proof}
We take $\mathcal{C}$ to be the collection of all pairs of the form $((L^2(\mu),d_{L^2}),(A^T_Nf)_{N\in\mathbb{N}})$ where $d_{L^2}(f,g)=||f-g||_{L^2(\mu)}$ and $f\in L^2(\mu)$ with $||f||_{L^2}\leq 1$.  Consider an ultraproduct of elements from $\mathcal{C}$; it is the $L^2$ space of a probability measure space $(X,\mu)$ with a measurable, measure-preserving $T:\mathbb{Z}^*\curvearrowright(X,\mu)$.  Given a function $f\in L^2(\mu)$ with $||f||_{L^2}\leq 1$, we can consider the sequence $(A^T_{\bar N}f)_{\bar N\in\mathbb{N}^*}$.

Consider a cut $I$.  If $I$ is not additive, convergence follows from the previous lemma, so assume $I$ is additive.  Then whenever $C\in\mathbb{N}$ and $\bar n\in I$, $C\bar n\in I$.

Consider the space $\mathcal{N}\subseteq L^2(\mu)$ spanned by functions of the form $f-f\circ T^{\bar n}$ for $\bar n\in I$.  Whenever $g\in\mathcal{N}$, for any $\epsilon>0$ we may write $g=\sum_{i=0}^kc_i(f-f\circ T^{\bar n_i})+g^-$ where $||g^-||_{L^2}<\epsilon/2$, $k\in\mathbb{N}$, and each $c_i\in\mathbb{R}$.  Whenever $\bar M>\frac{4\sum_{i=0}^k|c_i|\bar n_i}{\epsilon}$ we have
\begin{align*}
||A^T_{\bar M}g||_{L^2}
&=||\frac{1}{\bar M}\sum_{\bar m=1}^{\bar M}\sum_{i=0}^kc_i(f\circ T^{\bar m}-f\circ T^{\bar n_i+\bar m})+A^T_{\bar M}g^-||_{L^2}\\
&<\sum_{i=0}^k|c_i|\cdot||\frac{1}{\bar M}\sum_{\bar m=1}^{\bar M}(f\circ T^{\bar m}-f\circ T^{\bar n_i+\bar m})||_{L^2}+\epsilon/2\\
&\leq\sum_{i=0}^k|c_i|\frac{2\bar n_i}{\bar M}||f||_{L^2}+\epsilon/2\\
&\leq\epsilon.
\end{align*}
In particular, since $\lceil\frac{4\sum_{i=0}^k|c_i|\bar n_i}{\epsilon}\rceil\in I$, for $g\in \mathcal{N}$, $A^T_{\bar N}g$ converges to $0$ in $I$.

There is a projection $f_0=\mathbb{E}(f\mid \mathcal{N})$.  Let $f^-=f-f_0$.  For any $\bar n\in I$, observe that, using the invariance of $T^{\bar n}$,
\begin{align*}
  ||f^--f^-\circ T^{\bar n}||_{L^2}^2
&=\langle f^-,f^-\rangle-2\langle f^-,f^-\circ T^{\bar n}\rangle+\langle f^-\circ T^{\bar n},f^-\circ T^{\bar n}\rangle\\
&=2(\langle f^-,f^-\rangle-\langle f^-,f^-\circ T^{\bar n}\rangle)\\
&=2\langle f^-,f^--f^-\circ T^{\bar n}\rangle\\
&=0
\end{align*}
because $f^--f^-\circ T^{\bar n}\in\mathcal{N}$.  Therefore
\[||A^T_{\bar M}f-f^-||_{L^2}=||A^T_{\bar M}f_0||_{L^2}+||f^--f^-||_{L^2}\]
approaches $0$ as $\bar M$ gets large in $I$.  In particular, $A^T_{\bar M}f$ converges to $f^-$ in the cut $I$.

By Theorem \ref{thm:main}, we obtain uniform bounds on $\epsilon$-jumps.
\end{proof}

\section{Nonconventional Ergodic Averages}

\subsection{Preliminaries}

\begin{definition}
 When $I$ is a cut in $\mathbb{N}^*$, we write $\mathbb{Z}(I)$ for $\{z\in\mathbb{Z}^*\mid |z|\in I\}$.

  Let $(X,\mu)$ be a probability measure space and $T:\mathbb{Z}(I)^d\curvearrowright(X,\mu)$ a measurable, measure-preserving action.  We write $T_i^{\bar n}:X\rightarrow X$ for the action $T^{(0,\ldots,\bar n,\ldots,0)}$ with the $\bar n$ in the $i$-th position and abbreviate $f\circ T^{\bar n}_i$ by $T^{-\bar n}_if$.

We define
\[A_{\bar N}^{T}(f_1,\ldots,f_d)=\frac{1}{\bar N}\sum_{\bar n=1}^{\bar N}\prod_{1\leq i\leq d} T^{-\bar n}_if_i.\]
\end{definition}

Of course, we are primarily interested in the case where $I=\mathbb{N}$.  The average appearing in the ergodic theorem is then the case where $d=1$.

\begin{definition}
  When a sequence of functions $f_{\bar N}$ converges in $I$ in the $L^2$ norm, we write $\lim_{\bar N\rightarrow I}f_{\bar N}$ for the $L^2$-limit of these functions.
\end{definition}

We observe that the van der Corput trick holds in any additive cut, following the standard proof without change.
\begin{lemma}[van der Corput]
  Suppose that $a_{\bar n}\in L^2(X)$ with $L^2$ norm bounded by $1$ for all $\bar n\in I$ where $I$ is an additive cut.
  If
\[\lim_{H\rightarrow I}\limsup_{\bar N\rightarrow I}\frac{1}{\bar H}\sum_{\bar h=1}^{\bar H}\left|\frac{1}{\bar N}\sum_{\bar n=1}^{\bar N}\int a_{\bar n+\bar h}a_{\bar n}\,d\mu\right|=0\]
then
\[\lim_{\bar N\rightarrow I}\left|\frac{1}{\bar N}\sum_{\bar n=1}^{\bar N}a_{\bar n}\right|=0.\]
\end{lemma}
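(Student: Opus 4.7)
The plan is to adapt the classical van der Corput argument verbatim, with additivity of $I$ playing the single crucial role of keeping all shifted indices inside $I$. Fix $\epsilon>0$; I want to show that $\left\|\frac{1}{\bar N}\sum_{\bar n=1}^{\bar N}a_{\bar n}\right\|_{L^2}<\epsilon$ for all sufficiently large $\bar N\in I$. I would first choose $\bar H\in I$ large enough that the two terms in the hypothesis are small (specifically, so that $1/\bar H<\epsilon^2/3$ and $\limsup_{\bar N\to I}\frac{1}{\bar H}\sum_{\bar h=1}^{\bar H}\left|\frac{1}{\bar N}\sum_{\bar n=1}^{\bar N}\int a_{\bar n+\bar h}a_{\bar n}\,d\mu\right|<\epsilon^2/3$), and then consider $\bar N\in I$ much larger than $\bar H$ (specifically, $\bar H/\bar N<\epsilon^2/3$). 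Since $I$ is additive, $\bar n+\bar h\in I$ for every $\bar n\leq\bar N$ and $\bar h\leq\bar H$, so every $a_{\bar n+\bar h}$ appearing below is defined.

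Next I would use the averaging identity
\[\frac{1}{\bar N}\sum_{\bar n=1}^{\bar N}a_{\bar n}=\frac{1}{\bar N}\sum_{\bar n=1}^{\bar N}\frac{1}{\bar H}\sum_{\bar h=1}^{\bar H}a_{\bar n+\bar h}+E\]
where $\|E\|_{L^2}=O(\bar H/\bar N)$ by a standard telescoping of the boundary terms (this step uses additivity again to rewrite the shifted double sum). Then Cauchy--Schwarz moves the norm past the outer $\bar n$-average:
\[\left\|\frac{1}{\bar N}\sum_{\bar n=1}^{\bar N}\frac{1}{\bar H}\sum_{\bar h=1}^{\bar H}a_{\bar n+\bar h}\right\|_{L^2}^{2}\leq \frac{1}{\bar N}\sum_{\bar n=1}^{\bar N}\left\|\frac{1}{\bar H}\sum_{\bar h=1}^{\bar H}a_{\bar n+\bar h}\right\|_{L^2}^{2}.\]

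Expanding the inner square and swapping the finite sums, this equals
\[\frac{1}{\bar H^{2}}\sum_{\bar h_{1},\bar h_{2}=1}^{\bar H}\frac{1}{\bar N}\sum_{\bar n=1}^{\bar N}\int a_{\bar n+\bar h_{1}}a_{\bar n+\bar h_{2}}\,d\mu.\]
The $\bar H$ diagonal terms $\bar h_1=\bar h_2$ each contribute at most $1$, giving an overall contribution of at most $1/\bar H$. For off-diagonal terms, writing $\bar h=|\bar h_{1}-\bar h_{2}|$ and shifting $\bar n$ (again legitimate by additivity, at a cost of $O(\bar H/\bar N)$), the remaining contribution is bounded by $\frac{2}{\bar H}\sum_{\bar h=1}^{\bar H-1}\left|\frac{1}{\bar N}\sum_{\bar n=1}^{\bar N}\int a_{\bar n+\bar h}a_{\bar n}\,d\mu\right|$.

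Combining everything yields
\[\left\|\frac{1}{\bar N}\sum_{\bar n=1}^{\bar N}a_{\bar n}\right\|_{L^2}^{2}\leq \frac{1}{\bar H}+\frac{2}{\bar H}\sum_{\bar h=1}^{\bar H-1}\left|\frac{1}{\bar N}\sum_{\bar n=1}^{\bar N}\int a_{\bar n+\bar h}a_{\bar n}\,d\mu\right|+O(\bar H/\bar N),\]
and by the choices of $\bar H$ and $\bar N$ above this is less than $\epsilon^{2}$. The only nontrivial point is bookkeeping: the shifts $\bar n\mapsto \bar n+\bar h$ and the boundary corrections must all stay inside $I$, which is guaranteed precisely because $I$ is additive; without this, there is no reason for $a_{\bar n+\bar h}$ to even be defined. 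Since this is the one place the classical proof uses anything beyond pure arithmetic on the indices, the argument otherwise transfers unchanged.
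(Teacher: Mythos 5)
Your proof is correct, and it matches what the paper intends: the paper explicitly declines to write out this argument, remarking only that the lemma ``follows the standard proof without change,'' and your argument is precisely that standard van der Corput argument with the key observation that additivity of $I$ is exactly what keeps the shifted indices $\bar n + \bar h$ (and hence the whole computation) inside the cut, so that a threshold $\bar N_0 > C\bar H/\epsilon^2$ still lies in $I$. The only quibble is a factor of $2$ in the off-diagonal term, which means your constants should be $\epsilon^2/6$ rather than $\epsilon^2/3$; this is cosmetic and does not affect the argument.
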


Following Austin's proof \cite{MR2599882} that the averages $A^T_N(f_1,\ldots,f_d)$ converge, we proceed by induction on $d$.  The case $d=1$ is the mean ergodic theorem discussed above.


  

\begin{lemma}
  Let $I$ be an additive cut and let $T:\mathbb{Z}(I)^d\curvearrowright(X,\mu)$ be measure-preserving.

Whenever $f_1\in L^2(X)$ and $f_2,\ldots,f_d\in L^\infty(X)$, the averages $A^T_{\bar N}(f_1,\ldots,f_d)$ converge in $I$.
\end{lemma}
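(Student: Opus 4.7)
The argument proceeds by induction on $d$. The base case $d=1$ is the mean ergodic theorem in the cut $I$, which was already established in Theorem \ref{thm:mean} (the proof handled every cut $I$, additive or not, combining Lemma \ref{thm:non_additive} with the argument inside additive cuts).

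For the inductive step my plan is to transport Austin's proof \cite{MR2599882} to the cut setting, using the van der Corput lemma just stated---which requires the additivity of $I$---as the principal reduction step. Setting $a_{\bar n} = \prod_{i=1}^d T^{-\bar n}_i f_i$ and substituting $y = T_1^{\bar n} x$ under the $T_1^{\bar n}$-invariance of $\mu$ gives
\[\int a_{\bar n+\bar h}\overline{a_{\bar n}}\,d\mu = \int (T^{-\bar h}_1 f_1)\overline{f_1}\cdot\prod_{i=2}^d S_i^{-\bar n}\bigl((T^{-\bar h}_i f_i)\overline{f_i}\bigr)\,d\mu,\]
where $S_i = T_i T_1^{-1}$ defines a measure-preserving action $\mathbb{Z}(I)^{d-1}\curvearrowright(X,\mu)$. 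The inner $\bar n$-average of the product is an order-$(d-1)$ ergodic average of $L^\infty$ functions for $(S_2,\ldots,S_d)$, so by inductive hypothesis it converges in $I$, providing for each fixed $\bar h\in I$ a limit $L^{\bar h}\in L^2$.

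Since van der Corput delivers vanishing rather than convergence, I plan to decompose $f_1$ orthogonally against a suitable characteristic factor $\mathcal{Z}$. Writing $f_1 = \mathbb{E}(f_1\mid\mathcal{Z}) + f_1^\perp$, the $f_1^\perp$ contribution is controlled by the van der Corput reduction above: one verifies that
\[\lim_{\bar H\to I}\frac{1}{\bar H}\sum_{\bar h=1}^{\bar H}\left|\int (T^{-\bar h}_1 f_1^\perp)\overline{f_1^\perp}\cdot L^{\bar h}\,d\mu\right|=0\]
using $f_1^\perp\perp\mathcal{Z}$, and the lemma yields vanishing of the corresponding average. The $\mathbb{E}(f_1\mid\mathcal{Z})$ contribution, through the structure of $\mathcal{Z}$, reduces to another order-$(d-1)$ average to which the inductive hypothesis applies. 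Following Austin, the cleanest way to ensure $\mathcal{Z}$ has the required splitting behavior is to first pass to a \emph{sated} (pleasant) extension of $(X,\mu,T)$; this fits naturally inside the ultraproduct metric structure, since extensions are themselves coded by ultraproducts of extensions of the input systems.

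The main obstacle is bookkeeping: checking that each ingredient of Austin's argument---the sated extension, the conditional expectation onto $\mathcal{Z}$, and the off-diagonal reindexings $S_i = T_i T_1^{-1}$---remains valid when we work in the cut $I$ rather than in $\mathbb{N}$. Every operation involved is addition, subtraction, or multiplication by a fixed standard integer, each preserved by $I$ precisely because $I$ is additive; the invocations of van der Corput and the base case also only need $I$'s additivity. Once that verification is in place, Austin's proof transports essentially unchanged and yields convergence of $A^T_{\bar N}(f_1,\ldots,f_d)$ in $I$.
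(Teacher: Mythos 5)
Your proposal captures the right high-level architecture---induction on $d$, van der Corput as the reduction step requiring additivity, and an orthogonal decomposition of $f_1$ against a characteristic subspace---and your reindexing $S_i=T_iT_1^{-1}$ matches the paper's use of $T_1^{-1}T_i$ up to commutation. However, there is a genuine gap at the point where you pass to a sated (pleasant) extension of $(X,\mu,T)$.

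The paper's proof deliberately avoids extensions. Instead it introduces a modified Furstenberg self-joining $\mu^{\oplus d,I}$ (defined so that the defining ergodic limit is taken over $I$, which exists by the inductive hypothesis) together with the projection $\mathbb{P}$ onto the first coordinate, and takes $\mathcal{N}$ to be the span of the functions
\[u_g=\lim_{\bar H\rightarrow I}\frac{1}{\bar H}\sum_{\bar h=1}^{\bar H}\lim_{\bar S\rightarrow I}\frac{1}{\bar S}\sum_{\bar s=1}^{\bar S}T_1^{-\bar h}g\prod_{1<i\leq d}(T_1^{-1}T_i)^{-\bar s}(f_iT_i^{-\bar h}f_i).\]
Existence of $u_g$ is obtained by recognizing the partial average $u_{g,\bar H}$ as $\mathbb{P}(A^{\tilde T_1}_{\bar H}(\tilde g)\tilde f)$ inside the self-joining and applying the mean ergodic theorem there; convergence of $\hat A_{\bar N}(g)$ for $g\in\mathcal{N}$ is again pushed into the self-joining, where $\tilde f_1$ is $\tilde T_1$-invariant and the inductive hypothesis applies to $A^{\tilde T}_{\bar N}(1,f_2,\ldots,f_d)$. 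This explicit self-joining is exactly the device that replaces the sated extension from Austin's original argument.

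The difficulty with transporting satedness into this setting is that the sated extension is built by a limit construction indexed over the standard naturals, and there is no evident analogue of that construction over a cut $I$; nor is satedness a first-order (or continuous-logic) property, so it need not pass to ultraproducts. Your remark that ``extensions are themselves coded by ultraproducts of extensions of the input systems'' is precisely the claim that would need proof: an ultraproduct of sated extensions of the $X_i$ is not automatically a sated extension of $\prod_{\mathcal{U}}X_i$, and even if one had such an extension it would live in a different ultraproduct and thus not directly feed back into Theorem~\ref{thm:main}. Working directly with $\mu^{\oplus d,I}$ and $\mathcal{N}$ sidesteps the problem entirely, which is the paper's essential modification of Austin's proof.
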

\begin{proof}
We proceed by induction on $d$.  The $d=1$ case is shown in Theorem \ref{thm:mean}, so we assume $d>1$ and the claim holds for $d-1$.  Let $f_1,\ldots,f_d$ be given.

We will repeatedly need the averages
\[\hat A_{\bar N}(g)=A^T_{\bar N}(g,f_2,\ldots,f_d).\]

\begin{claim}
For every $g$, the function
\[u_g=\lim_{\bar H\rightarrow I}\frac{1}{\bar H}\sum_{\bar h=1}^{\bar H}\lim_{\bar S\rightarrow I}\frac{1}{\bar S}\sum_{\bar s=1}^{\bar S}T_1^{-\bar h}g\prod_{1<i\leq d}(T_1^{-1}T_i)^{-\bar s}(f_iT_i^{-\bar h}f_i)\]
exists.
\end{claim}
\begin{claimproof}
We define functions
\[u_{g,\bar H}=\frac{1}{\bar H}\sum_{\bar h=1}^{\bar H}T_1^{-\bar h}g\lim_{\bar S\rightarrow I}\frac{1}{\bar S}\sum_{\bar s=1}^{\bar S}\prod_{1<i\leq d}(T_1^{-1}T_i)^{-\bar s}(f_iT_i^{-\bar h}f_i).\]
For each $\bar h$, we can show that the limit exists using the inductive hypothesis applied to the transformation $U:\mathbb{Z}(I)^{d-1}\curvearrowright(X,\mu)$ given by $U_i=T_1^{-1}T_{i+1}$.

We now use the inductive hypothesis to show that this sequence of functions also converges for each $g\in L^2(X)$.  We use a modified version of the \emph{Furstenberg self-joining}.  We define a measure $\mu^{\oplus d,I}$ on $X^d$ by setting
\[\mu^{\oplus d,I}(\prod_{1\leq i\leq d}B_i)=\int \chi_{B_1}\lim_{\bar S\rightarrow I}\frac{1}{\bar S}\sum_{\bar s=1}^{\bar S}\prod_{1<i\leq d}(T_1^{-1}T_i)^{-{\bar s}}\chi_{B_i}\,d\mu.\]
The inductive hypotheis guarantees that this limit exists.  We define $\tilde T:\mathbb{Z}(I)^d\curvearrowright (X^d,\mu^{\oplus d,I})$ by $\tilde T_1(x_1,\ldots,x_d)=(T_1x_1,T_2x_2,\ldots,T_dx_d)$ and, for $1<i\leq d$, $\tilde T_i(x_1,\ldots,x_d)=(T_ix_1,\ldots,T_ix_d)$.  Since $T$ is measure-preserving, $\tilde T$ is as well; for $\tilde T_i$ with $1<i$, this is immediate.  To see that $\tilde T_1$ is measure-preserving, observe that
\begin{align*}
  \mu^{\oplus d,I}(T_1^{-\bar n}\prod_{1\leq i\leq d}B_i)
&=\int \chi_{B_1}(T_1^{\bar n}x) \lim_{\bar S\rightarrow I}\frac{1}{\bar S}\sum_{\bar s=1}^{\bar S}\prod_{1<i\leq d}(T_1^{-1}T_i)^{-\bar s}\chi_{B_i}(T_i^{\bar n}x)\,d\mu\\
&=\int \chi_{B_1}(T_1^{\bar n}x) \lim_{\bar S\rightarrow I}\frac{1}{\bar S}\sum_{\bar s=1}^{\bar S} \prod_{1<i\leq d}(T_1^{-1}T_i)^{-\bar s-\bar n}\chi_{B_i}(T_1^{\bar n}x)\,d\mu\\
&=\int \chi_{B_1}(x) \lim_{\bar S\rightarrow I}\frac{1}{\bar S}\sum_{\bar s=1}^{\bar S}\prod_{1<i\leq d}(T_1^{-1}T_i)^{-\bar s-\bar n}\chi_{B_i}(x)\,d\mu\\
&=\int \chi_{B_1}(x) \lim_{\bar S\rightarrow I}\frac{1}{\bar S}\sum_{\bar s=1}^{\bar S}\prod_{1<i\leq d}(T_1^{-1}T_i)^{-\bar s}\chi_{B_i}(x)\,d\mu.\\
\end{align*}
Note that the last step uses the fact that $I$ is additive.

Consider the function $\tilde g(x_1,\ldots,x_d)=g(x_1)\prod_{1< i\leq d}f_i(x_i)$.  
By Theorem \ref{thm:mean}, the averages $A_{\bar N}^{\tilde T_1}(\tilde g)$ converge in $I$.  Observe that this also shows that the sequence $A_{\bar N}^{\tilde T_1}(\tilde g)\tilde f$ converges in $I$.  Consider the projection onto the first coordinate (defined up to $L^2$ norm) given by $\mathbb{P}(\prod_{1\leq i\leq d}B_i)(x)=\chi_{B_1}(x) \lim_{\bar S\rightarrow I}\frac{1}{\bar S}\sum_{\bar s=1}^{\bar S}\prod_{1<i\leq d}(T_1^{-1}T_i)^{-\bar s}\chi_{B_i}(x)$.
\begin{align*}
\mathbb{P}(A_{\bar H}^{\tilde T_1}(\tilde g)\tilde f)(x)
&=\mathbb{P}(\frac{1}{\bar H}\sum_{\bar h=1}^{\bar H}g(T_1^{\bar h}x_1)\prod_{1<i\leq d}f_i(T_i^{\bar h}x_i)f_i(x_i))\\
&=\frac{1}{\bar H}\sum_{\bar h=1}^{\bar H}g(T_1^{\bar h}x) \lim_{\bar S\rightarrow I}\frac{1}{\bar S}\sum_{\bar s=1}^{\bar S}\prod_{1<i\leq d}f_i(T_i^{\bar h+\bar s}x)f_i(T_i^{\bar s}x)\\
&=u_{g,\bar H}.
\end{align*}
Since $\mathbb{P}$ is a contraction from $L^2(X^d)$ to $L^2(X)$ and the sequence $A_{\bar H}^{\tilde T_1}(\tilde g)\tilde f$ converges, the sequence $u_{g,\bar H}$ also converges.

\end{claimproof}

We let $\mathcal{N}$ be the linear subspace of $L^2(X)$ generated by functions of the form $u_g$.  

\begin{claim}
For $g\in\mathcal{N}$, $\hat A_{\bar N}(g)$ converges in $I$.
\end{claim}
\begin{claimproof}
It suffices to show that every $\hat A_{\bar N}(u_g)$ converges.  To see this, we again compare to a corresponding sequence in the self-joining.

First, observe that
\begin{align*}
\hat A_{\bar N}(u_g)
&=\frac{1}{\bar N}\sum_{\bar n=1}^{\bar N}\lim_{\bar H\rightarrow I}\frac{1}{\bar H}\sum_{\bar h=1}^{\bar H}\lim_{\bar S\rightarrow I}\frac{1}{\bar S}\sum_{\bar s=1}^{\bar S}T_1^{-\bar h-\bar n}g\prod_{1<i\leq d}T_1^{-\bar n}(T_1^{-1}T_i)^{-\bar s}(f_iT_i^{-\bar h}f_i)\prod_{1<i\leq d}T_i^{-\bar n}f_i\\
&=\frac{1}{\bar N}\sum_{\bar n=1}^{\bar N}\lim_{\bar H\rightarrow I}\frac{1}{\bar H}\sum_{\bar h=1}^{\bar H}\lim_{\bar S\rightarrow I}\frac{1}{\bar S}\sum_{\bar s=1}^{\bar S}T_1^{-\bar h-\bar n}g\prod_{1<i\leq d}(T_1^{-1}T_i)^{-\bar s}(T_i^{-\bar n}f_iT_i^{-\bar h-\bar n}f_i) \prod_{1<i\leq d}T_i^{-\bar n}f_i\\
\end{align*}

Define $\tilde f_1(x_1,\ldots,x_d)=\lim_{\bar H\rightarrow I}\frac{1}{\bar H}\sum_{\bar h=1}^{\bar H}\tilde T_1^{-\bar h}\tilde g$ and, for $1<i\leq d$ let $\tilde f_i(x_1,\ldots,x_d)=f_i(x_1)f_i(x_d)$.  Observe that, since $\tilde f_1$ is $\tilde T_1$-invariant, $A_{\bar N}^{\tilde T}(\tilde f_1,\ldots,\tilde f_d)=\tilde f_1 A_{\bar N}^{\tilde T}(1,f_2,\ldots,f_d)$ which converges by the inductive hypothesis.  

By choosing $\bar H$ large enough, $||A_{\bar H}^{\tilde T_1}(\tilde g)-\tilde f_1||_{L^2(X^d)}$ is small, and therefore 
\begin{align*}
&\lim_{\bar H\rightarrow I}||A_{\bar N}(A_{\bar H}^{\tilde T_1}(\tilde g),\tilde f_2,\ldots,\tilde f_d)-A_{\bar N'}(A_{\bar H}^{\tilde T_1}(\tilde g),\tilde f_2,\ldots,\tilde f_d)||_{L^2(X^d)}\\
&=||A_{\bar N}(\tilde f_1,\ldots,\tilde f_d)-A_{\bar N'}(\tilde f_1,\ldots,\tilde f_d)||_{L^2(X^d)}\end{align*}
uniformly in $\bar N$.

Since
\begin{align*}
  &\lim_{\bar H\rightarrow I}\mathbb{P}(A_{\bar N}(A_{\bar H}^{\tilde T_1}(\tilde g),\tilde f_2,\ldots,\tilde f_d))\\
&=\lim_{\bar H\rightarrow I}\mathbb{P}(\frac{1}{\bar N}\sum_{\bar n=1}^{\bar N}\frac{1}{\bar H}\sum_{\bar h=1}^{\bar H}g(T_1^{\bar h+\bar n}x_1)\prod_{1<i\leq d}f_i(T_i^{\bar h+\bar n}x_i)f_i(T_i^{\bar n}x_1)f_i(T_i^{\bar n}x_i))\\
&=\frac{1}{\bar N}\sum_{\bar n=1}^{\bar N}\lim_{\bar H\rightarrow I}\frac{1}{\bar H}\sum_{\bar h=1}^{\bar H}\lim_{\bar S\rightarrow I}\frac{1}{\bar S}\sum_{\bar s=1}^{\bar S}T_1^{-\bar h-\bar n}g T^{-\bar n}_if_i\prod_{1<i\leq d}(T_1^{-1}T_i)^{-\bar s}(T_i^{-\bar n}f_iT^{-\bar h-\bar n}f_i)\\
&=\hat A_{\bar N}(u_g),
\end{align*}
also $\hat A_{\bar N}(u_g)$ converges.

\end{claimproof}

Now consider the function $f_1$.  We write $f^-=f_1-\mathbb{E}(f_1\mid\mathcal{N})$.  Suppose $\hat A_{\bar N}(f^-)$ does not converge to $0$; then by van der Corput, there is an $\epsilon>0$ so that we may find sufficiently large $\bar H$ so that
\[\epsilon^2<\limsup_{\bar S\rightarrow I}\left|\int \frac{1}{\bar H}\sum_{\bar h=1}^{\bar H}\frac{1}{\bar S}\sum_{\bar s=1}^{\bar S} T_1^{-\bar h-\bar s}f^-T_1^{-\bar s}f^-\prod_{1< i\leq d}T_i^{-\bar h-\bar s}f_iT_i^{-\bar s}f_id\mu\right|.\]
Shifting each term by $T_1^{\bar s}$,
\[\epsilon^2<\limsup_{\bar S\rightarrow I}\left|\int f^-\frac{1}{\bar H}\sum_{\bar h=1}^{\bar H}\frac{1}{\bar S}\sum_{\bar s=1}^{\bar S} T_1^{-\bar h}f^-\prod_{1< i\leq d}(T_1^{-1}T_i)^{-\bar s}(f_iT_i^{-\bar h}f_i)d\mu\right|.\]
Since this holds for sufficiently large $\bar H$, $|\int f^- u_{f^-}d\mu|>0$.  But this contradicts the fact that $\mathbb{E}(f^-\mid\mathcal{N})=0$.

So
\[\hat A_{\bar N}(f_1)=\hat A_{\bar N}(f^-)+\hat A_{\bar N}(\mathbb{E}(f_1\mid\mathcal{N}))\rightarrow\lim_{\bar N\rightarrow I}\mathbb{E}(f_1\mid\mathcal{N}).\]
\end{proof}

Combining this with Lemma \ref{thm:non_additive} and Theorem \ref{thm:main}, we obtain:
\begin{theorem}
  For every $\epsilon>0$ there is a $K$ so that whenever $T:\mathbb{Z}^d\curvearrowright(X,\mu)$ is measure-preserving, $||f_1||_{L^2}\leq 1$, and $||f_i||_{L^\infty}\leq 1$ for $1<i\leq d$, the sequence $A^T_N(f_1,\ldots,f_d)$ does not admit $K$ $\epsilon$-jumps.
\end{theorem}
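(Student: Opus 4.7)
The plan is to apply Theorem \ref{thm:main} to the collection $\mathcal{C}$ of pairs $((L^2(\mu),d_{L^2}),(A^T_N(f_1,\ldots,f_d))_{N\in\mathbb{N}})$ ranging over all probability spaces $(X,\mu)$, all measure-preserving actions $T:\mathbb{Z}^d\curvearrowright(X,\mu)$, and all tuples $(f_1,\ldots,f_d)$ with $\|f_1\|_{L^2}\leq 1$ and $\|f_i\|_{L^\infty}\leq 1$ for $i>1$. By Theorem \ref{thm:main}, it suffices to verify that whenever we form an ultraproduct of such data, the resulting sequence $(A^T_{\bar N}(f_1,\ldots,f_d))_{\bar N\in\mathbb{N}^*}$ converges in every cut $I\subseteq \mathbb{N}^*$.

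First I would recall that the ultraproduct of the underlying data yields a probability space $(X,\mu)$ (in the sense of the Loeb-style construction used throughout the paper) together with a measure-preserving action of $\mathbb{Z}^{*d}$, and that restricting this action to $\mathbb{Z}(I)^d$ produces exactly the kind of data to which the preceding lemma applies. The norm bounds on the $f_i$ are preserved by the ultraproduct, so $\|f_1\|_{L^2}\leq 1$ and $\|f_i\|_{L^\infty}\leq 1$ for $i>1$ continue to hold.

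The verification then splits on whether $I$ is additive. If $I$ is additive, the preceding lemma directly gives convergence of $A^T_{\bar N}(f_1,\ldots,f_d)$ in $I$. If $I$ is non-additive, set $c_{\bar n}=\prod_{1\leq i\leq d}T_i^{-\bar n}f_i$; by H\"older's inequality and the invariance of $T_1$, $\|c_{\bar n}\|_{L^2}\leq \|f_1\|_{L^2}\prod_{i>1}\|f_i\|_{L^\infty}\leq 1$, so Lemma \ref{thm:non_additive} applies and gives convergence of the averages $A^T_{\bar N}(f_1,\ldots,f_d)=\frac{1}{\bar N}\sum_{\bar n=1}^{\bar N}c_{\bar n}$ in $I$.

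Having confirmed convergence in every cut of every ultraproduct, Theorem \ref{thm:main} immediately yields, for each $\epsilon>0$, a uniform $K$ such that no sequence in $\mathcal{C}$ admits $K$ $\epsilon$-jumps. The only point that requires care is the description of the ultraproduct action and the observation that the $L^2$ and $L^\infty$ norm bounds persist; everything else is a direct appeal to the two preceding results, so I anticipate no substantive obstacle beyond recording the case split on additivity of $I$.
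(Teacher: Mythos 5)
Your proposal is correct and matches the paper's argument: the paper also obtains this theorem by feeding the two preceding results into Theorem \ref{thm:main}, handling additive cuts via the lemma just proved and non-additive cuts via Lemma \ref{thm:non_additive}. Your $L^2$ bound on $c_{\bar n}$ via H\"older and measure-preservation is exactly the observation needed to invoke Lemma \ref{thm:non_additive}, and the remainder is the same direct appeal to Theorem \ref{thm:main}.
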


\section{Bounded Jumps over Long Distances}\label{sec:distances}

One might think that, in the previous two sections, we were fortunate that the nature of averages gives convergence in non-additive cuts: the true underlying arguments by von Neumann and Austin only pertained to additive cuts, and it was an incidental feature that we could handle non-additive cuts by another means.

A natural generalization of convergence in all cuts is to consider convergence only in cuts with suitable closure properties---say, only additive cuts, or only cuts closed under exponentiation.  This corresponds to a variant of bounding the number of jumps where we only consider jumps between elements which are sufficiently far apart.

\begin{definition}
Let $(a_n)_{n\in\mathbb{N}}$ be a sequence of elements in some metric space and let $h:\mathbb{N}\rightarrow\mathbb{N}$ be a weakly increasing function with $n< h(n)$ for all $n$.  For any $\epsilon>0$, we say $(a_n)$ \emph{admits $K$ $\epsilon$-jumps of distance $h$} if there are $n_1<n_2<\cdots<n_K$ such that, for each $k<K$, $h(n_k)\leq n_{j+1}$ and $d(a_{n_k},a_{n_{k+1}})\geq\epsilon$.
\end{definition}
Although phrased differently, failing to admit $K$ $\epsilon$-jumps of distance $h$ is essentially Kohlenbach and Safarik's notion of effevtive learnability \cite{MR3111912}.  Admitting $K$ $\epsilon$-jumps is the same as taking $h(n)=n+1$.

Considering only jumps of distance $h$ allows for the situation where a sequence can have brief windows with many oscillations, but other than these windows has only boundedly many jumps.  In practice, one usually thinks of $h$ as varying with $\epsilon$---say, $h(n)=2^{r(\epsilon)}n$ for all $n$---but this can be absorbed into $K$, since admitting $K$ $\epsilon$-jumps of distance $h$ is equivalent to admitting roughly $\lfloor K/2r\rfloor$ $\epsilon/2$-jumps of distance $h^r$ (the iteration of $h$ $r$-times).

It is known that some cases exist with bounds of this more general kind which cannot be improved to a bound on jumps \cite{MR3438729}.

\begin{definition}
  If $h:\mathbb{N}\rightarrow\mathbb{N}$, we write $\bar h:\mathbb{N}^*\rightarrow\mathbb{N}^*$ for the function given by $\bar h([\langle n_i\rangle])=\langle h(n_i)\rangle$.  

We say a cut $I\subseteq\mathbb{N}^*$ is \emph{ closed under $h$} if for every $\bar n\in I$, $\bar h(\bar n)\in I$.
\end{definition}

\begin{theorem}\label{thm:main_dist}
Let $\mathcal{C}$ be a collection of pairs $((X,d),(a_n)_{n\in\mathbb{N}})$ where each $(a_n)$ is a sequence of elements in the corresponding metric space $(X,d)$.  For any weakly increasing function $h:\mathbb{N}\rightarrow\mathbb{N}$ with $n< h(n)$ for all $n$, the following are equivalent:
\begin{itemize}
\item for every $\epsilon>0$ there is a $K$ so that for every $((X,d),(a_n)_{n\in\mathbb{N}})\in \mathcal{C}$, the sequence $(a_n)_{n\in\mathbb{N}}$ does not admit $K$ $\epsilon$-jumps of distance $h$,
\item whenever $\mathcal{U}$ is a nonprincipal ultrafilter on $\mathbb{N}$, and, for each $i$, $((X_i,d_i),(a^i_n)_{n\in\mathbb{N}})\in \mathcal{C}$, the sequence $(a_{\bar n})_{\bar n\in\mathbb{N}^*}$ converges in every cut in $(X_{\mathcal{U}},d_{\mathcal{U}})$ closed under $h$.
\end{itemize}
\end{theorem}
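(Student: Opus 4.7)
The plan is to adapt the two-sided argument of Theorem \ref{thm:main}, with the modifications in each direction forced by the distance constraint $h$. For the implication from failure of the jump bound to failure of convergence: given $\epsilon>0$ and, for each $K$, a pair $((X^K,d^K),(a^K_n))\in\mathcal{C}$ admitting $K$ $\epsilon$-jumps of distance $h$ with witnesses $n^K_1<\cdots<n^K_K$ satisfying $h(n^K_k)\leq n^K_{k+1}$, I would set $\bar n_i=[\langle n^K_i\rangle_K]$ (padding by $0$ when $K<i$) and $I=\{\bar m\mid \exists i\ \bar m<\bar n_i\}$ exactly as in Theorem \ref{thm:main}. The new point is to verify that $I$ is closed under $\bar h$: if $\bar m<\bar n_i$, then since $h$ is weakly increasing we have $\bar h(\bar m)\leq \bar h(\bar n_i)=[\langle h(n^K_i)\rangle]\leq[\langle n^K_{i+1}\rangle]=\bar n_{i+1}<\bar n_{i+2}$, so $\bar h(\bar m)\in I$. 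The rest proceeds as in the earlier proof: jumps of size $\geq\epsilon$ between consecutive $\bar n_i$ in the ultraproduct preclude convergence to within $\epsilon/2$ in $I$.

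For the converse, suppose the sequence fails to converge in some cut $I$ closed under $\bar h$, so there is an $\epsilon>0$ such that every $\bar n\in I$ admits some $\bar m>\bar n$ in $I$ with $d_{\mathcal{U}}(a_{\bar n},a_{\bar m})\geq\epsilon$. Let $K$ be the uniform bound on $(\epsilon/4)$-jumps of distance $h$. I will build $\bar n_1<\cdots<\bar n_{K+1}$ in $I$ realizing $K$ $(\epsilon/2)$-jumps of distance $\bar h$ by the following iteration. Given $\bar n_k\in I$, closure under $\bar h$ places $\bar h(\bar n_k)$ in $I$, and the failure of convergence supplies some $\bar m>\bar h(\bar n_k)$ in $I$ with $d_{\mathcal{U}}(a_{\bar h(\bar n_k)},a_{\bar m})\geq\epsilon$. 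The triangle inequality then forces at least one of $d_{\mathcal{U}}(a_{\bar n_k},a_{\bar h(\bar n_k)})$ or $d_{\mathcal{U}}(a_{\bar n_k},a_{\bar m})$ to be at least $\epsilon/2$, and I take $\bar n_{k+1}$ to be whichever of $\bar h(\bar n_k)$ or $\bar m$ realizes this bound; in either case $\bar n_{k+1}\geq \bar h(\bar n_k)$ and $d_{\mathcal{U}}(a_{\bar n_k},a_{\bar n_{k+1}})\geq\epsilon/2$, as required.

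Finally, to descend from the ultraproduct to a single coordinate I would copy the \L{}o\'s-style step from the proof of Theorem \ref{thm:main}: for each $k\leq K$ the sets $\{i\mid d_i(a^i_{n^i_k},a^i_{n^i_{k+1}})>\epsilon/4\}$, $\{i\mid h(n^i_k)\leq n^i_{k+1}\}$, and $\{i\mid n^i_k<n^i_{k+1}\}$ all lie in $\mathcal{U}$ (the latter two because the corresponding nonstandard inequalities hold by construction), so the intersection of these finitely many sets is nonempty and exhibits an $i$ for which $(a^i_n)$ admits $K$ $(\epsilon/4)$-jumps of distance $h$, contradicting the hypothesized uniform bound. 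The only genuinely new ingredient beyond Theorem \ref{thm:main} is the triangle-inequality branching that forces the distance constraint $\bar h(\bar n_k)\leq\bar n_{k+1}$; I do not anticipate any deeper obstacle, and the closure-under-$\bar h$ verification in the first direction and the two extra \L{}o\'s-type sets in the second direction are the only other bookkeeping changes.
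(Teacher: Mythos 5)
Your proposal matches the paper's proof in both directions. In the forward direction the paper forms the same cut $I=\{\bar m\mid\exists i\ \bar m<\bar n_i\}$ from the $\mathcal{U}$-limits of the jump witnesses and checks closure under $h$ exactly as you do, via $\bar h(\bar m)\leq\bar h(\bar n_i)\leq\bar n_{i+1}\in I$; in the converse direction the paper likewise takes $K$ a uniform bound on $\epsilon/4$-jumps of distance $h$, iteratively produces $\bar n_{k+1}\geq\bar h(\bar n_k)$ in $I$ with $d_{\mathcal{U}}(a_{\bar n_k},a_{\bar n_{k+1}})\geq\epsilon/2$, and then descends to a single coordinate $i$ by intersecting the relevant $\mathcal{U}$-large sets. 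You have merely made explicit two steps the paper compresses into ``finish as in Theorem \ref{thm:main}''---the triangle-inequality case split showing $\bar n_{k+1}$ can be taken $\geq\bar h(\bar n_k)$, and the additional \L o\'s-type set $\{i\mid h(n^i_k)\leq n^i_{k+1}\}$ in the descent---so this is the same argument, written out in slightly fuller detail.
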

For instance, considering only additive cuts is considering the case where $h(n)=2n$---that is, the case where a sequence has only boundedly many functions spaced out by a multiplicative function.
\begin{proof}
  Suppose the former fails: there is some $\epsilon>0$ so that, for every $K$ there is an $((X,d),(a_n)_{n\in\mathbb{N}})\in \mathcal{C}$ so that $(a_n)_{n\in\mathbb{N}}$ admits $K$ $\epsilon$-jumps of distance $h$.  For each $K,r$, choose such an $((X^K,d^K),(a^K_n)_{n\in\mathbb{N}})$ and choose witnesses $n^K_1<n^K_2<\cdots<n^K_K$ so that, for each $k<K$, $n^K_{k+1}\geq h(n^K_k)$ and $d(a^K_{n^K_k},a^K_{n^K_{k+1}})\geq\epsilon$.

As above, take any nonprincipal ultrafilter $\mathcal{U}$ and consider the sequence $(a_{\bar n})_{\bar n\in\mathbb{N}^*}$.  For each $i\in\mathbb{N}$, take $\bar n_i=[\langle n^K_i\rangle_{K\in\mathbb{N}}]$ and let $I=\{\bar m\mid \exists i\ \bar m<\bar n_i\}$.  For any $\bar m\in I$, we have $\bar m<\bar n_i$ for some $i$, and therefore $\bar h(\bar m)\leq \bar h(\bar n_i)\leq \bar n_{i+1}\in I$, so $I$ is closed under $h$.  The remainder of the proof is as in the proof of Theorem \ref{thm:main}.

Conversely, suppose the former holds, and consider any nonprincipal ultrafilter $\mathcal{U}$, any sequence $((X_i,d_i),(a^i_n)_{n\in\mathbb{N}})$ of elements of $\mathcal{C}$.  Consider some cut $I$ closed under $h$ and some $\epsilon>0$.  Let $K$ witness the uniform bound for $\epsilon/4$-jumps.  Choose any $\bar n_1\in I$; then $\bar h(\bar n_1)\in I$, and if $\bar h(\bar n_1)$ does not witnesses convergence to within $\epsilon$, there is some $n_2\geq\bar h(n_1)$ with $d_{\mathcal{U}}(\bar n_1,\bar n_2)\geq\epsilon/2$.  We continue, choosing $\bar n_3\geq\bar h(\bar n_2)$ in $I$ and so on, and finish as in the proof of Theorem \ref{thm:main}.
\end{proof}

\section{Directions}

Avigad and Rute \cite{MR3345161} ask whether bounds for fluctuations exist for Walsh's generalization \cite{MR2912715} of Tao's nonconventional averages to polynomial actions of nilpotent groups.  Similar methods to those in the previous section might apply, particularly to Austin's proof \cite{MR3384494} of the result.

Although we only considered $L^2$ convergence, the same criterion gives bounds on upcrossings from pointwise convergence in all cuts.  Various papers \cite{MR3043589,MR2995648,2014arXiv1406.2608E,MR2753294,2014arXiv1406.5930H} have studied pointwise convergence of various nonconventional ergodic averages.  Any of these results might be adapted to nonstandard cuts, thereby obtaining upcrossing bounds.

It would be interesting to explicitly compare Kohlenbach and Safarik's result \cite{MR3111912} about existence of bounds on fluctuations to ours.  In particular, investigation of why their conditions imply convergence in all cuts might yield some insight on the relationship between provability in restricted systems and analogous results in nonstandard analysis.

\printbibliography
\end{document}